\DeclareMathOperator{\Spec}{Spec}
\DeclareMathOperator{\Aut}{Aut}
\DeclareMathOperator{\End}{End}
\DeclareMathOperator{\Char}{char}
\DeclareMathOperator{\Det}{det}
\DeclareMathOperator{\DetB}{Det_p}
\DeclareMathOperator{\Mat}{Mat}
\DeclareMathOperator{\Tr}{tr}
\DeclareMathOperator{\Deg}{deg}
\DeclareMathOperator{\ord}{ord}
\newtheorem{thm}{Theorem}[section]
\newtheorem{lem}[thm]{Lemma}
\newtheorem{slem}[thm]{Sublemma}
\newtheorem{prop}[thm]{Proposition}
\newtheorem{Def}[thm]{Definition}
\newtheorem{remark}[thm]{Remark}
\begin{document}
\fontsize{12}{12pt}\selectfont
\title{\bf On Planar Algebraic Curves and Holonomic $\mathcal{D}$-modules
 in Positive Characteristic }
\author{Alexei Kanel-Belov and Andrey Elishev}

\date{}

\maketitle
\renewcommand{\abstractname}{Abstract}
\begin{abstract}
In this paper we study a correspondence between cyclic modules over the first Weyl algebra and planar algebraic curves in positive characteristic. In particular, we show that any such curve has a preimage under a morphism of certain ind-schemes. This property might pave the way to an indirect proof of existence of a canonical isomorphism between the group of algebra automorphisms of the first Weyl algebra over the field complex numbers and the group of polynomial symplectomorphisms of $\mathbb{C}^2$.
\end{abstract}

\section{Introduction}

Let $R$ be an associative unitary ring. For $n\in\mathbb{N}$ the $n$-th Weyl algebra $A_{n,R}$ over $R$ is defined as the quotient
\begin{equation*}\tag{1.1}
A_{n,R}=R\langle x_1,\ldots,x_n,y_1,\ldots,y_n\rangle/I,
\end{equation*}
with the ideal $I$ being generated by all elements of the form
\begin{equation*}
x_ix_j-x_jx_i,\; y_iy_j-y_jy_i,\; y_ix_j-x_jy_i-\delta_{ij}
\end{equation*}
for $1\leq i,j\leq n$.\\

\smallskip

The $R$-algebra $A_{n,R}$ is associative, unital $R$-algebra with $2n$ generators given by the images of $x_i,\;y_i$ under the standard projection (we will from here on denote these generators simply as $x_i,\;y_i$). It is also a free $R$-module of infinite countable rank.\\

\smallskip

The case when $R\equiv\mathbf{k}$ is a field is of main interest. The Weyl algebra then coincides with the algebra $D(\mathbb{A}^n_{\mathbf{k}})$ of polynomial differential operators on $\mathbb{A}^n_{\mathbf{k}}=\Spec\mathbf{k}[x_1,\ldots,x_n]$, with $x_i$ acting as multiplication by an indeterminate $x_i$ and $y_i$ as taking partial derivative $\frac{\partial}{\partial x_i}$.\\

\medskip

We now recall several open conjectures in which the Weyl algebra emerges, along with a number of objects naturally associated to the free algebra, the polynomial algebra and itself. The statements outlined below have been demonstrated to possess a profound interconnection as well as relation to other problems of mathematical physics.\\

\smallskip

Let $\Aut(A_{n,\mathbf{k}})$ be the group of algebra automorphisms of $A_{n,\mathbf{k}}$, and let $\End(A_{n,\mathbf{k}})$ be the monoid of algebra endomorphisms of $A_{n,\mathbf{k}}$. \\

\medskip

The \textbf{Dixmier Conjecture} $\text{DC}_n$, first stated in \cite{1}, asserts that every algebra endomorphism of $A_{n,\mathbf{k}}$ is invertible, that is, $\Aut(A_{n,\mathbf{k}})=\End(A_{n,\mathbf{k}})$, whenever $\Char\mathbf{k}=0$. By the Lefschetz principle it is sufficient to set the base field to be the field of complex numbers $\mathbb{C}$. The $\text{DC}_n$ implies $\text{DC}_m$ for all $n>m$; the conjunction $\bigwedge_{n\in\mathbb{N}}\text{DC}_n=\text{DC}_{\infty}$ is referred to as the stable Dixmier conjecture. The conjecture $\text{DC}_n$ is open for all $n\in\mathbb{N}$.\\

\medskip

\begin{remark}
A similar statement in the case of finite characteristic does not hold, at least for $n>1$. Indeed, a few years ago Bavula \cite{15} asked whether any $\mathbf{k}$-endomorphism of $A_{n,\mathbf{k}}$ ($\Char\mathbf{k}=p>0$) is injective. The negative answer to this question for $n>1$ was provided by Makar-Limanov, \cite{17}.
\end{remark}

\medskip

The \textbf{Jacobian Conjecture} $\text{JC}_n$ states that for any field $\mathbf{k}$ of characteristic zero any polynomial endomorphism $\phi$ of $\mathbb{A}^n_{\mathbf{k}}$ with unital jacobian
\begin{equation*}\tag{1.2}
\Det\left\vert\left\vert\left(\frac{\partial \phi^*(x_i)}{\partial x_j}\right)_{1\leq i,j\leq n}\right\vert\right\vert=1
\end{equation*}
is an automorphism. Again, by the Lefschetz principle one may set $\mathbf{k}=\mathbb{C}$. The $\text{JC}_n$ implies $\text{JC}_m$ whenever $n>m$, and $\text{JC}_{\infty}$ denotes the stable Jacobian conjecture. Evidently $\text{JC}_1$ is true, as linear maps are globally invertible; $\text{JC}_n$, however, is open for all $n\geq 2$. A detailed description of the Jacobian conjecture and its equivalent formulations can be found in \cite{2}.

\medskip

It is known that $\text{DC}_n\Rightarrow\text{JC}_n$, and that $\text{JC}_{2n}\Rightarrow\text{DC}_n$, which together imply that $\text{JC}$ and $\text{DC}$ are stably equivalent. The implication $\text{JC}_{2n}\Rightarrow\text{DC}_n$ is, much unlike its converse, rather non-trivial. It was shown to be true by the first author together with Maxim Kontsevich, see \cite{3}, and independently by Y. Tsuchimoto, cf. \cite{5}, \cite{13}. Both proofs in one way or another rely on reduction of the Weyl algebra to positive characteristic and on upper-bounding the degrees of the endomorphisms involved. A version of proof differing from that of \cite{3} was developed by Bavula (\cite{14},\; \cite{15}, also \cite{16}); it employs the inversion formulae for automorphisms of the Weyl algebra.

\medskip

Another, perhaps even more surprising conjecture was formulated by Kontsevich together with the first author around the same time, cf. \cite{4}, and is generally referred to as the \textbf{Belov-Kanel - Kontsevich Conjecture} $\text{B-KKC}_n$. Let
\begin{equation*}\tag{1.3}
 P_{n,\mathbb{C}}=\mathbb{C}[z_1,\ldots,z_{2n}]
\end{equation*}
be the polynomial $\mathbb{C}$-algebra over $2n$ variables equipped with the Poisson bracket
\begin{equation*}\tag{1.4}
\lbrace z_i,z_j\rbrace = \delta_{i,n+j}-\delta_{i+n,j}.
\end{equation*}
Denote by $\Aut(P_{n,\mathbb{C}})$ the group of Poisson structure-preserving automorphisms of $P_{n,\mathbb{C}}$, i.e. the group of polynomial symplectomorphisms of $\mathbb{A}^{2n}_{\mathbb{C}}$. The $\text{B-KKC}_n$ \textbf{states} that the groups $\Aut(A_{n,\mathbb{C}})$ and $\Aut(P_{n,\mathbb{C}})$ are canonically isomorphic. In effect, this statement was conjectured to hold also for automorphisms over the field of rational numbers.

\medskip

The $\text{B-KKC}_n$ is true for $n=1$. The proof is essentially a straightforward description of the groups involved: the structure of $\Aut(P_{n,\mathbb{C}})$ was obtained by Jung and van der Kulk in mid-twentieth century, (see \cite{6} and \cite{7} respectively), and is represented as the quotient of a free product of two groups as follows:
\begin{equation*}\tag{1.5}
\Aut(P_{n,\mathbb{C}})\simeq G_1 * G_2/(G_1\cap G_2),
\end{equation*}
where
$G_1\simeq SL(2,\mathbb{C})\ltimes\mathbb{C}^2$
is the special affine group and $G_2$ is a group of polynomial transformations of the form
\begin{equation*}
(x_1,x_2)\mapsto(ax_1+f(x_2),a^{-1}x_2),\;\;a\in\mathbb{C}^{\times},\;\;f\in\mathbb{C}[z].
\end{equation*}
Thirty years ago Makar-Limanov, \cite{8}, \cite{9}, showed that the automorphism groups of the corresponding Weyl algebra and the free associative algebra in two variables admit a similar description. Subsequently, the case $n=1$ is resolved positively. Higher-dimensional case is open to this day; however, for a substantial class of automorphisms the canonical isomorphism does exist (the groups of so-called tame automorphisms are canonically isomorphic for all $n$ - see \cite{4}, specifically Section 7). \\

\bigskip

\begin{remark}
In the case $n=1$ and in characteristic zero, there are other canonical isomorphisms in an analogous setting - particularly, between $\Aut \mathbf{k}[x,y]$, $\Aut \mathbf{k}\langle x,y\rangle$, and $\Aut \mathbf{k}\lbrace x,y\rbrace$, the automorphism groups of the polynomial, free associative and free Poisson algebra in two variables, respectively - see \cite{18}. The known way of establishing that relies heavily on the fact that all those automorphisms are tame. This convenient property ceases to exist in higher dimensions, cf. \cite{19}.
\end{remark}

\bigskip

It is reasonable to expect the conjectured canonical isomorphism to coincide with the aforementioned isomorphism on the set of tame automorphisms, although there is also reason to anticipate a rather complicated object. It seems pertinent to look for an indirect proof, something along the lines of Section 8 in \cite{4}. The present paper aims to explore a particular means of doing so, in the known case $n=1$; however, the regarded indirect approach is still to be proven in general, as - at the very least technical - difficulties arise when one increases the dimension of the underlying phase space (not to mention the fact that a generic holonomic $\mathcal{D}$-module is a fairly straightforward object only in the case $n=1$).

\section{Correspondence between holonomic $\mathcal{D}$-modules \\
and lagrangian subvarieties}
In this section we recall the notion of a lagrangian submanifold of a symplectic manifold as well as describe how symplectomorphisms are naturally associated with lagrangian submanifolds.

\medskip

Let $P$ be a smooth real or complex manifold, $TP$ be its tangent bundle. A symplectic structure $\Omega$ on a smooth manifold $P$ is any closed nondegenerate differential $2$-form on $P$.

\medskip

The simplest example is given by the $2n$-dimensional complex affine space $\mathbb{A}^{2n}_{\mathbb{C}}$ with the symplectic structure $\Omega$ expressed in local coordinates $(x_1,\ldots,x_n,p_1,\ldots,p_n)$ as
\begin{equation*}
\Omega=\sum_{i=1}^{n}dx_i\wedge dp_i.
\end{equation*}

\medskip

Another archetypal example would be that of a finite-dimensional real vector space (or, slightly more generally, a free module of finite rank over an arbitrary commutative ring). If $V$ is a vector space, then a symplectic structure $\Omega$ is a skew-symmetric bilinear form on $V$, such that the associated map to the dual space $V^*$
\begin{equation*}
\tilde{\Omega}:V\rightarrow V^*,\;\;v\mapsto \tilde{\Omega}(v):\;\tilde{\Omega}(v)(w)=\Omega(v,w)
\end{equation*}
is an isomorphism. Making use of this point of view, one may define the symplectic structure on a smooth manifold $P$ as a smooth family $\Omega=\lbrace \Omega_p\rbrace$ of symplectic structures on the fibers of the tangent bundle $TP$.

\bigskip

Certain types of submanifolds of symplectic manifolds arise naturally from the notion of orthogonality.

\medskip

Let $(V,\;\Omega)$ be a symplectic vector space in the sense of the above definition. For any subspace $W\subseteq V$, the orthogonal complement $W^{\perp}$ is defined as the subspace
\begin{equation*}
\lbrace v\in V\;|\;\Omega(v,w)=0,\;\forall w\in W\rbrace.
\end{equation*}

\medskip

It is easy to see that $\dim(W)+\dim(W^{\perp})=\dim(V)$.
\medskip

\medskip

One then has the following key definitions:
\begin{Def}
$W$ is called isotropic if $W\subseteq W^{\perp}$. $W$ is called coisotropic if $W\supseteq W^{\perp}$.
\end{Def}

Note that $W$ is isotropic iff the restriction $\Omega_{|W}$ is identically zero.

\begin{Def}
A subspace $W\subseteq V$ is lagrangian if it is both isotropic and coisotropic.
\end{Def}

It follows that for a lagrangian subspace $W$, $\dim(W)=\frac1{2}\dim{V}$.
\medskip

Let $Q$ be a submanifold of $P$. Its tangent bundle $TQ$ may be viewed as a subbundle of $T_QP=\cup_{q\in Q}T_qP$. The notions of isotropic, coisotropic and lagrangian subspace are extended to the case of submanifolds in an obvious way. Namely, $Q$ is called an isotropic [coisotropic] submanifold of $P$ if for any $q\in Q$ the space $T_qQ$ is an isotropic [coisotropic] subspace of $T_qP$. $Q$ is a lagrangian submanifold of $P$ if it is both isotropic and coisotropic.

\bigskip

Apart from possessing quite a number of remarkable properties, lagrangian submanifolds naturally arise from symplectomorpishs.
Consider the case $P=\mathbb{A}_{\mathbb{C}}^{2n}$ with the symplectic structure as in the example above.
Let $\psi\in\Aut(P_{n,\mathbb{C}})$ be a smooth symplectomorphism, that is
\begin{equation*}\tag{2.1}
\psi:\mathbb{A}^{2n}_{\mathbb{C}}\rightarrow \mathbb{A}^{2n}_{\mathbb{C}}
\end{equation*}
is an isomorphism preserving the symplectic structure $\Omega=\sum_{i=1}^ndx_i\wedge dp_i$. \\
\medskip

For an affine symplectic manifold $P=(\mathbb{A}^{2n}_{\mathbb{C}},\;\Omega)$, the manifold $\bar{P}=(\mathbb{A}^{2n}_{\mathbb{C}},\;\bar{\Omega})$, $\bar{\Omega}=-\Omega$ is called the dual manifold of $P$. Consider the (tensor) product $P\times\bar{P}=(\mathbb{A}^{4n}_{\mathbb{C}},\;\pi_1^*\Omega+\pi_2^*\bar{\Omega})$ with $\pi_i^*$ being the duals of cartesian projections
\begin{equation*}\tag{2.2}
\pi_i:\mathbb{A}^{2n}_{\mathbb{C}}\times \mathbb{A}^{2n}_{\mathbb{C}}\rightarrow \mathbb{A}^{2n}_{\mathbb{C}},\;\;i=1,2.
\end{equation*}\\
Then the graph of $\psi$, i.e. the set
\begin{equation*}
\Gamma_{\psi}=\lbrace (z,\;\psi(z))\;|\;z\in \mathbb{A}^{2n}_{\mathbb{C}}\rbrace
\end{equation*}
is a lagrangian submanifold of $P\times\bar{P}$.
\medskip

More generally, one has the following \\
\begin{prop}
 Let $P_1=(X_1,\;\Omega_1),\;P_2=(X_2,\;\Omega_2)$ be smooth manifolds equipped with symplectic structures $\Omega_1, \;\Omega_2$ respectively. For every smooth symplectomorphism $\psi:P_1\rightarrow P_2$ its graph
\begin{equation*}\tag{2.3}
\Gamma_{\psi}=\lbrace (z,\;\psi(z))\;|\;z\in X_1\rbrace
\end{equation*}
is a lagrangian submanifold of $P_1\times \bar{P_2}$ (where $\bar{P_2}$ is the dual manifold of $P_2$).
\end{prop}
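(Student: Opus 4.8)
The plan is to realize the graph $\Gamma_{\psi}$ as the image of the embedding
\[
\gamma:X_1\to X_1\times X_2,\qquad \gamma(z)=(z,\psi(z)),
\]
and to verify the lagrangian condition pointwise on tangent spaces. Since $\psi$ is a diffeomorphism, $\gamma$ is a smooth embedding onto $\Gamma_{\psi}$, and at each $z\in X_1$ the differential $d\gamma_z=(\mathrm{id},d\psi_z)$ identifies $T_zX_1$ with the tangent space $T_{\gamma(z)}\Gamma_{\psi}$, whose vectors are exactly the pairs $(v,d\psi_z(v))$. By the remark that $W$ is isotropic iff $\Omega_{|W}$ vanishes identically, it suffices to show that the restriction of the symplectic form $\omega:=\pi_1^*\Omega_1+\pi_2^*\bar{\Omega}_2$ to $\Gamma_{\psi}$ vanishes, and then to match dimensions.

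First I would compute the pullback $\gamma^*\omega$. Using the relations $\pi_1\circ\gamma=\mathrm{id}_{X_1}$ and $\pi_2\circ\gamma=\psi$ together with functoriality of pullback, one obtains
\[
\gamma^*\pi_1^*\Omega_1=(\pi_1\circ\gamma)^*\Omega_1=\Omega_1,\qquad \gamma^*\pi_2^*\bar{\Omega}_2=(\pi_2\circ\gamma)^*\bar{\Omega}_2=-\psi^*\Omega_2.
\]
The crucial point is that the dual manifold $\bar{P_2}$ carries the form $\bar{\Omega}_2=-\Omega_2$, which supplies exactly the sign needed for cancellation. Since $\psi$ is by hypothesis a symplectomorphism, $\psi^*\Omega_2=\Omega_1$, and therefore
\[
\gamma^*\omega=\Omega_1-\psi^*\Omega_2=\Omega_1-\Omega_1=0.
\]
As $\gamma$ parametrizes $\Gamma_{\psi}$, this says precisely that $\omega$ restricts to zero on every tangent space $T_{\gamma(z)}\Gamma_{\psi}$; hence $\Gamma_{\psi}$ is isotropic.

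Finally I would upgrade isotropy to the full lagrangian condition by a dimension count. Because $\psi$ is a diffeomorphism, $\dim X_1=\dim X_2$, so $\dim\Gamma_{\psi}=\dim X_1=\tfrac12\dim(X_1\times X_2)=\tfrac12\dim(P_1\times\bar{P_2})$. Applying this fiberwise, each $W:=T_{\gamma(z)}\Gamma_{\psi}$ satisfies $W\subseteq W^{\perp}$ by isotropy, while the relation $\dim W+\dim W^{\perp}=\dim V$ forces $\dim W^{\perp}=\dim W$ once $\dim W$ is half the ambient dimension; hence $W=W^{\perp}$, so $W$ is simultaneously coisotropic and therefore lagrangian. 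I do not anticipate a genuine obstacle here: the entire content is the sign-driven cancellation in the pullback computation, and the only points demanding care are the bookkeeping of the sign convention built into $\bar{P_2}$ and the equality $\dim X_1=\dim X_2$ that makes the half-dimension count valid.
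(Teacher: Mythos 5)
Your proof is correct. The isotropy step is the same computation in both arguments --- the vanishing of $\Omega_1(v,v')-\Omega_2(d\psi(v),d\psi(v'))$ on pairs of tangent vectors to the graph --- though your formulation via the pullback identity $\gamma^*(\pi_1^*\Omega_1+\pi_2^*\bar{\Omega}_2)=\Omega_1-\psi^*\Omega_2=0$ is cleaner and treats general manifolds more carefully than the paper's pointwise computation, which notationally conflates points of the graph with tangent vectors. The genuine divergence is in the coisotropy step: the paper argues directly that $\Gamma_{\psi}^{\perp}\subseteq\Gamma_{\psi}$ by taking $v=(z_1,z_2)$ orthogonal to the graph, using invertibility of $\psi$ to write $z_2=\psi(y)$, and invoking nondegeneracy of $\Omega_1$ to force $y=z_1$; you instead observe that $\dim\Gamma_{\psi}=\tfrac12\dim(X_1\times X_2)$ and use the identity $\dim W+\dim W^{\perp}=\dim V$ to upgrade $W\subseteq W^{\perp}$ to $W=W^{\perp}$. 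Your route is shorter and standard, leaning on the dimension formula the paper records just before Definition 2.1 (itself a consequence of nondegeneracy); the paper's route makes the role of the invertibility of $\psi$ explicit by exhibiting a concrete preimage of any orthogonal vector, which is also the mechanism behind the converse statement mentioned in Remark 2.4. Both arguments are complete.
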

\medskip
\begin{proof}
Indeed, $\Gamma_{\psi}$ is isotropic:
\begin{gather*}
\Gamma_{\psi}^{\perp}\equiv\lbrace w\in\Gamma_{\psi}\;|\;\Omega(w,w')=0,\;\forall w'\in\Gamma_\psi\rbrace,\\
\forall w=(z,\;\psi(z))\in\Gamma_\psi,\;w'=(z',\;\psi(z'))\in\Gamma_\psi\Rightarrow \\
\Omega(w,w')=\Omega_1(z,z')-\Omega_2(\psi(z),\psi(z'))=0,
\end{gather*}
because $\psi$ preserves the symplectic structure. \\

Then, $\Gamma_\psi$ is also coisotropic. Take any $v=(z_1,\;z_2)$ in the orthogonal complement $\Gamma_{\psi}^{\perp}$ and any $w=(z,\;\psi(z))$ in $\Gamma_\psi$. By definition
\begin{equation*}
\Omega(v,w)=0,
\end{equation*}
so that
\begin{equation*}
\Omega_1(z_1,z)=\Omega_2(z_2,\psi(z)).
\end{equation*}
As $\psi$ is an isomorphism, $\exists!\; y\in P_1:\;z_2=\psi(y)$, but then
\begin{equation*}
\Omega_1(y-z_1,z)=0,
\end{equation*}
and $y=z_1$ follows from non-degeneracy.
\end{proof}

\medskip
\begin{remark}
Note that the converse is also true. \\
\end{remark}
\medskip

Now, by this proposition, any polynomial symplectomorphism $\psi\in\Aut(P_{n,\mathbb{C}})$ corresponds to a lagrangian subvariety $L_\psi$ of the form (2.3) in $\mathbb{A}^{4n}_{\mathbb{C}}$ endowed with symplectic structure formed from that of $\mathbb{A}^{2n}_{\mathbb{C}}$ as above.\\

\medskip

One can establish along the lines of Proposition 2.3 a similar statement involving polynomial symplectomorphisms of affine space in positive characteristic. \\

\medskip

As noted in \cite{4}, any automorphism $\phi\in\Aut(A_{n,\mathbf{k}})$ gives a bimodule, which can be viewed as a holonomic $A_{2n,\mathbf{k}}$-module $M_\phi$. It might be possible to arrive at the $\text{B-KKC}_n$ by establishing a canonical correspondence between such modules and lagrangian subvarieties of $\mathbb{A}^{4n}_{\mathbf{k}}$, thus constructing an inverse map by means of appropriate lifting to characteristic zero
\begin{equation*}\tag{2.4}
\Aut(P_{n,\mathbb{C}})\rightarrow \Aut(A_{n,\mathbb{C}}).
\end{equation*}

\medskip

The base case $n=1$ seems to be penetrable in positive characteristic. Any lagrangian subvariety of $\mathbb{A}^{4}_{\mathbf{k}}$ has dimension 2 and therefore corresponds to a system of two polynomial equations of the form
\begin{gather*}\tag{2.5}
f_1(x_1,x_2,y_1,y_2)=0,\\
f_2(x_1,x_2,y_1,y_2)=0.
\end{gather*}

Then, the subvariety can be projected separately onto subspaces spanned by \\
$(x_1,x_2,y_1)$ and $(x_1,x_2,y_2)$, leading in each case to a set of planar curves parameterised by $x_2$. In the next section we establish that any such curve corresponds to at least one differential operator, so that the projections of (2.5) would naturally generate elements of $A_{2,\mathbf{k}}$. However, it still remains to be seen whether the appropriate $A_{2,\mathbf{k}}$-module is non-trivial. This problem is closely related to a situation which emerges when one considers the integrability of certain systems of partial differential equations.

\section{Weyl algebra and planar curves \\
in positive characteristic}
Let $\Char \mathbf{k}=p>0$ and let $A_{1,\mathbf{k}}$ be the first Weyl algebra over $\mathbf{k}$. It as a free module of rank $p^2$ over its center $C(A_{1,\mathbf{k}})=\mathbf{k}\left[x^p,y^p\right]$ together with the standard basis $\mathfrak{B}_1=\lbrace x^iy^j\;|\;0\leq i,j\leq p-1 \rbrace$. A key observation is that $A_{1,\mathbf{k}}$ is an Azumaya algebra of rank $p$ over $\mathbf{k}[x_1,x_2]$ (and, in general, that $A_{n,\mathbf{k}}$ is Azumaya of rank $p^n$ over the polynomial algebra in $2n$ variables). In particular, we perform the following procedure. \

\medskip

Let us extend the center $\mathbf{k}\left[x^p,y^p\right]$ of the first Weyl algebra by adding central variables $\tilde{x},\;\tilde{y}$, such that

\begin{equation*}\tag{3.1}
\tilde{x}^p=x^p,\;\tilde{y}^p=y^p.
\end{equation*}
One has now the following lemma:

\medskip

\begin{lem}
The extension of $A_{1,\mathbf{k}}$ is isomorphic to the matrix algebra
\begin{equation*}\tag{3.2}
\Mat \left( p\times p,\; \mathbf{k}\left[\tilde{x},\tilde{y}\right]\right).
\end{equation*}
\end{lem}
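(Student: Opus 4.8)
The plan is to realize the extension as a base change of the Azumaya algebra $A_{1,\mathbf{k}}$ and then exhibit an explicit splitting module. Write $Z=\mathbf{k}[x^p,y^p]$ for the center and $R=\mathbf{k}[\tilde{x},\tilde{y}]$; adjoining $\tilde{x},\tilde{y}$ with $\tilde{x}^p=x^p,\ \tilde{y}^p=y^p$ makes $R$ the new center, so the extended algebra is $B=A_{1,\mathbf{k}}\otimes_{Z}R$, the map $Z\to R$ sending $x^p\mapsto\tilde{x}^p$ and $y^p\mapsto\tilde{y}^p$. Since $A_{1,\mathbf{k}}$ is free over $Z$ on $\mathfrak{B}_1=\{x^iy^j:0\le i,j\le p-1\}$, the algebra $B$ is free over $R$ on the same basis, hence a free $R$-module of rank $p^2$, exactly like $\Mat(p\times p,R)$. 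It therefore suffices to produce an $R$-algebra homomorphism $\rho\colon B\to\Mat(p\times p,R)$ and prove it is bijective.

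First I would construct the module. Set $M=R[\xi]/(\xi^p-\tilde{x}^p)$, a free $R$-module of rank $p$ with basis $1,\xi,\dots,\xi^{p-1}$, and let $B$ act by
\[
x\longmapsto(\text{multiplication by }\xi),\qquad y\longmapsto \partial_\xi+\tilde{y},
\]
the central variables acting as the scalars $\tilde{x},\tilde{y}$. The derivation $\partial_\xi$ descends to $M$ because $\partial_\xi(\xi^p-\tilde{x}^p)=p\xi^{p-1}=0$. I then check the defining relations of $B$: the Weyl relation holds since $[\partial_\xi+\tilde{y},\xi]=1$; multiplication by $\xi$ has $p$-th power equal to multiplication by $\xi^p=\tilde{x}^p$; and, because $\tilde{y}$ is a scalar commuting with $\partial_\xi$, the freshman identity in characteristic $p$ gives
\[
(\partial_\xi+\tilde{y})^p=\sum_{k=0}^{p}\binom{p}{k}\partial_\xi^{\,k}\tilde{y}^{\,p-k}=\partial_\xi^{\,p}+\tilde{y}^{\,p},
\]
the intermediate binomial coefficients vanishing modulo $p$; moreover $\partial_\xi^{\,p}=0$ on $M$, since $\partial_\xi^{\,p}\xi^{j}=0$ for $0\le j\le p-1$ (the coefficient $j(j-1)\cdots(j-p+1)$ is a product of $p$ consecutive integers, hence $\equiv0$). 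Thus $y^p$ acts as $\tilde{y}^{\,p}$, the action is well defined, and it yields an $R$-algebra homomorphism $\rho\colon B\to \End_R(M)\cong\Mat(p\times p,R)$. Pinning down this splitting module is the creative step on which everything rests.

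It remains to show $\rho$ is an isomorphism. Because $B$ and $\Mat(p\times p,R)$ are free of the same rank $p^2$, $\rho$ is represented by a square matrix over the domain $R$, and a surjective $R$-linear map between free modules of equal finite rank is automatically bijective (surjectivity provides a right inverse to the representing matrix, so its determinant is a unit); hence I only need surjectivity. This can be checked on fibers: the cokernel of $\rho$ is a finitely generated $R$-module, and it vanishes once $\operatorname{coker}(\rho)\otimes_R R/\mathfrak{m}=\operatorname{coker}(\rho\otimes_R R/\mathfrak{m})$ vanishes for every maximal ideal $\mathfrak{m}$. Base-changing along the faithfully flat extension $\mathbf{k}\hookrightarrow\bar{\mathbf{k}}$, I may assume $\mathbf{k}$ algebraically closed, so that $\mathfrak{m}=(\tilde{x}-a,\tilde{y}-b)$ and the fiber of $M$ is $M_{a,b}=\mathbf{k}[\xi]/(\xi-a)^p$, with $x$ acting by multiplication by $\xi$ and $y$ by $\partial_\xi+b$. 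A short argument shows $M_{a,b}$ is irreducible: any nonzero invariant subspace, being stable under $\partial_\xi$ and under multiplication by $\xi-a$, contains an element of nonzero constant term (apply $\partial_\xi$ sufficiently many times) and then, multiplying by powers of $\xi-a$, contains all of $1,\xi-a,\dots,(\xi-a)^{p-1}$. By Burnside's theorem the fiber map is onto $\Mat(p\times p,\mathbf{k})$ for every $\mathfrak{m}$, and the fiberwise criterion then gives surjectivity of $\rho$; together with the rank count this proves $\rho$ is an isomorphism.

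The main obstacle is precisely the passage from pointwise to global behaviour: injectivity of $\rho$ together with equality of ranks does not by itself force bijectivity, so the genuine work lies in establishing surjectivity over all of $R$. Equivalently one may phrase the conclusion structurally: $B$ is Azumaya over $R$ of degree $p$, $M$ is a faithful $B$-module that is projective of rank $p$ over $R$, and the commutant theorem for Azumaya algebras forces the centralizer of $\rho(B)$ in $\Mat(p\times p,R)$ to have rank $1$, whence $\rho$ is onto. Either route completes the proof.
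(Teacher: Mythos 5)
Your proof is correct, but it is organized quite differently from the paper's. The paper's argument shifts the generators, setting $x'=x-\tilde{x}$, $y'=y-\tilde{y}$, so that (by the freshman identity) the extended algebra becomes the $\mathbf{k}$-tensor product of $\mathbf{k}[\tilde{x},\tilde{y}]$ with the constant-coefficient algebra on two generators satisfying $[y',x']=1$, $(x')^p=(y')^p=0$; that algebra is identified with $\Mat(p\times p,\mathbf{k})$ by a direct calculation (its action on $\mathbf{k}[\xi]/(\xi^p)$ by $d/d\xi$ and multiplication by $\xi$), and the conclusion follows immediately by tensoring with $\mathbf{k}[\tilde{x},\tilde{y}]$ --- no global surjectivity argument is ever needed. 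You keep the generators unshifted, act on the rank-$p$ module $R[\xi]/(\xi^p-\tilde{x}^p)$ by $\xi$ and $\partial_\xi+\tilde{y}$, and then must do genuine work to show the resulting map to $\End_R(M)$ is onto, which you carry out correctly via irreducibility of the fibers, Burnside, and a Nakayama-type vanishing criterion, followed by the determinant trick for injectivity. Note that your splitting module is the paper's module in disguise: conjugating by the shift identifies $R[\xi]/\bigl((\xi-\tilde{x})^p\bigr)$ with $\mathbf{k}[\xi']/(\xi'^p)\otimes_{\mathbf{k}}R$ and your operators with $\xi'+\tilde{x}$ and $d/d\xi'+\tilde{y}$, which is exactly how the paper reduces to the field case. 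The paper's route is shorter and entirely elementary; yours costs more machinery (Nullstellensatz, faithfully flat descent, Burnside) but makes the Azumaya splitting module explicit over all of $R$ and is the pattern that survives in settings where no global linearizing shift is available.
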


\medskip

\begin{proof}
 Indeed, for any prime $p$ the unital algebra $A$ over $\mathbf{k}$, ($\Char\mathbf{k}=p$) generated by two elements $y_1,\;y_2$ satisfying the relations
\begin{equation*}\tag{3.3}
[y_1,y_2]=1,\;y_1^p=y_2^p=0
\end{equation*}
is isomorphic to $\Mat \left( p\times p,\; \mathbf{k}\right)$. Namely, a direct calculation shows there is an isomorphism
\begin{equation*}
A\rightarrow \End_{\mathbf{k}-\text{mod}}(\mathbf{k}[x]/(x^p))
\end{equation*}
given by
\begin{equation*}
y_1\mapsto d/dx,\;\;y_2\mapsto x
\end{equation*}
(differentiation and multiplication by $x$ operators).
\smallskip

Now, the extension
\begin{equation*}
\mathbf{k}[\tilde{x}^p,\tilde{y}^p]\rightarrow \mathbf{k}[\tilde{x},\tilde{y}]
\end{equation*}
is faithfully flat, as $\mathbf{k}[\tilde{x},\tilde{y}]$ is a flat $\mathbf{k}[\tilde{x}^p,\tilde{y}^p]$-algebra and the induced morphism
\begin{equation*}
\Spec \mathbf{k}[\tilde{x},\tilde{y}]\rightarrow \Spec \mathbf{k}[\tilde{x}^p,\tilde{y}^p]
\end{equation*}
is surjective.

\smallskip

The pullback algebra
\begin{equation*}\tag{3.4}
A=A_{1,\mathbf{k}}\otimes_{\mathbf{k}[\tilde{x}^p,\tilde{y}^p]}\mathbf{k}[\tilde{x},\tilde{y}]
\end{equation*}
as a $\mathbf{k}[\tilde{x},\tilde{y}]$-algebra is generated by $x,\;y$ with relations
\begin{equation*}\tag{3.5}
[y,x]=1,\;x^p=\tilde{x}^p,\;y^p=\tilde{y}^p.
\end{equation*}
Shifted generators $x'=x-\tilde{x},\;y'=y-\tilde{y}$ satisfy
\begin{equation*}\tag{3.6}
[y',x']=1,\;(x')^p=(y')^p=0,
\end{equation*}
which means that $A$ is isomorphic to the $\mathbf{k}$-tensor product of $\mathbf{k}[\tilde{x},\tilde{y}]$ with a $\mathbf{k}$-algebra generated by $x',\;y'$ as above. The remark at the beginning of the proof yields the statement of the lemma.
\end{proof}

\medskip

The above statement is a special case of a more general construction provided in \cite{3}. Also, a similar result holds for rings of differential operators over smooth affine schemes $X$ in positive characteristic, with the Weyl algebra emerging in the case $X=\mathbb{A}^n_{\mathbf{k}}$ (cf. \cite{11}).\\

\bigskip

The structure of a generic holonomic left $A_{1,\mathbf{k}}$-module is well known - any such module is of the form
\begin{equation*}
M=A_{1,\mathbf{k}}/A_{1,\mathbf{k}}\cdot L
\end{equation*}
for some differential operator $L=\sum_{i+j\leq N}a_{ij}x^iy^j\in A_{1,\mathbf{k}}$.   \\

\medskip

Let $X_p,\;Y_p$ denote the $p\times p$ matrices representing the shifted generators (3.6), $I_p$ be the identity matrix of size $p\times p$. Following \cite{11}, consider for every differential operator $L=\sum_{i+j\leq N}a_{ij}x^iy^j$ the $p$-determinant polynomial
\begin{equation*}\tag{3.7}
\DetB L:=\Det\left(\sum_{i+j\leq N}a_{ij}(X_p+\tilde{x}I_p)^i(Y_p+\tilde{y}I_p)^j\right).
\end{equation*}

\medskip

\begin{lem}
 For $L=\sum_{i+j\leq N}a_{ij}x^iy^j$ the corresponding $p$-determinant $\DetB L$ is a polynomial in $\tilde{x}^p,\;\tilde{y}^p$ of degree $\leq N$.\footnote{This statement was communicated to us by Maxim Kontsevich, also cf. \cite{11}}

\end{lem}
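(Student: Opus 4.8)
The plan is to read $\DetB L$ as the ordinary determinant of the image of $L$ under the splitting isomorphism of Lemma 3.1, and then to separate the two independent assertions hidden in the statement: that this determinant is central, i.e. a polynomial in the $p$-th powers $\tilde{x}^p,\tilde{y}^p$, and that its degree in those powers is at most $N$. First I would fix the identification. By Lemma 3.1 the base change $A=A_{1,\mathbf{k}}\otimes_{\mathbf{k}[\tilde{x}^p,\tilde{y}^p]}\mathbf{k}[\tilde{x},\tilde{y}]$ is isomorphic to $\Mat(p\times p,\mathbf{k}[\tilde{x},\tilde{y}])$, and under this isomorphism the shifted generators $x'=x-\tilde{x}$, $y'=y-\tilde{y}$ of (3.6) go to the nilpotent matrices $X_p,Y_p$; hence $x\mapsto X_p+\tilde{x}I_p$ and $y\mapsto Y_p+\tilde{y}I_p$. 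Therefore the matrix appearing in (3.7) is exactly the image $\Phi(L)$ of $L\otimes 1$, and $\DetB L=\Det\Phi(L)\in\mathbf{k}[\tilde{x},\tilde{y}]$.

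For the centrality I would invoke the Azumaya structure: $A_{1,\mathbf{k}}$ is Azumaya of degree $p$ over $R=\mathbf{k}[x^p,y^p]=\mathbf{k}[\tilde{x}^p,\tilde{y}^p]$, and $\mathbf{k}[\tilde{x},\tilde{y}]$ is a faithfully flat splitting ring by Lemma 3.1. Consequently $\Det\Phi(L)$ is nothing but the reduced norm $\operatorname{Nrd}(L)$. The essential point is that $\operatorname{Nrd}$ is independent of the chosen splitting: any two trivializations differ, over the tensor square $\mathbf{k}[\tilde{x},\tilde{y}]\otimes_R\mathbf{k}[\tilde{x},\tilde{y}]$, by an inner automorphism (Skolem--Noether), under which the determinant is unchanged, so faithfully flat descent forces $\Det\Phi(L)$ to lie in the base ring $R=\mathbf{k}[\tilde{x}^p,\tilde{y}^p]$. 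This yields the first assertion, that $\DetB L$ is a genuine polynomial in $\tilde{x}^p,\tilde{y}^p$.

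For the degree bound the argument is a direct count. Expanding $(X_p+\tilde{x}I_p)^i=\sum_k\binom{i}{k}X_p^k\tilde{x}^{\,i-k}$ shows that every entry of $(X_p+\tilde{x}I_p)^i$ is a polynomial of degree $\leq i$ in $\tilde{x}$, and likewise every entry of $(Y_p+\tilde{y}I_p)^j$ has degree $\leq j$ in $\tilde{y}$; hence each entry of $\Phi(L)=\sum_{i+j\leq N}a_{ij}(X_p+\tilde{x}I_p)^i(Y_p+\tilde{y}I_p)^j$ has total degree $\leq N$ in $(\tilde{x},\tilde{y})$. Since a $p\times p$ determinant is a sum of products of $p$ entries, $\Det\Phi(L)$ has total degree $\leq pN$ in $(\tilde{x},\tilde{y})$. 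Combining this with the preceding paragraph, namely that $\DetB L$ involves only the $p$-th powers $\tilde{x}^p,\tilde{y}^p$, forces its degree in the variables $\tilde{x}^p,\tilde{y}^p$ to be at most $pN/p=N$.

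The routine part is the degree count; the genuine obstacle is the centrality step, i.e. checking that the determinant really descends from $\mathbf{k}[\tilde{x},\tilde{y}]$ to the subring of $p$-th powers rather than merely lying in $\mathbf{k}[\tilde{x},\tilde{y}]$. One could sidestep the abstract reduced norm by instead comparing $\Det\Phi(L)$ with the determinant $\Det_R(L\,\cdot)$ of left multiplication by $L$ on the rank-$p^2$ free $R$-module $A_{1,\mathbf{k}}$, using the relation $\Det_R(L\,\cdot)=(\DetB L)^p$ valid for a degree-$p$ Azumaya algebra; but the reduced-norm and descent route seems cleanest, and is the one I would write out in full.
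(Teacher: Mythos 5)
Your proof is correct, but the route you take for the key step is genuinely different from the paper's. The paper proves centrality in an entirely elementary, computational way: it observes that differentiating the matrix $A(\tilde{x},\tilde{y})$ with respect to $\tilde{x}$ amounts to taking the commutator $-[A,Y_p]$ (since $[Y_p,X_p^k]=kX_p^{k-1}$), plugs this into Jacobi's formula $\frac{\partial\Det(A)}{\partial\tilde{x}}=\Tr\bigl(\mathrm{adj}(A)\frac{\partial A}{\partial\tilde{x}}\bigr)$, and kills the resulting trace $\Tr(\mathrm{adj}(A)[A,Y_p])$ by cyclicity, using that $\mathrm{adj}(A)$ commutes with $A$; in characteristic $p$ the vanishing of both partials forces $\DetB L\in\mathbf{k}[\tilde{x}^p,\tilde{y}^p]$. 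You instead identify $\DetB L$ with the reduced norm of the Azumaya algebra $A_{1,\mathbf{k}}$ over its center and obtain centrality from Skolem--Noether plus faithfully flat descent along $\mathbf{k}[\tilde{x}^p,\tilde{y}^p]\to\mathbf{k}[\tilde{x},\tilde{y}]$. Your argument is sound (the only point you gloss over --- that automorphisms of $\Mat(p\times p,\,\cdot\,)$ over the tensor square preserve the determinant --- is harmless, since inner-ness can be checked Zariski-locally and the determinant is local), and it buys conceptual clarity and immediate generalizability to $A_{n,\mathbf{k}}$ and to rings of differential operators on smooth affine schemes; what it costs is the machinery of descent, where the paper's Jacobi-formula computation is self-contained and two lines long. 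You also make explicit the degree bound (each matrix entry has total degree $\leq N$, so the determinant has degree $\leq pN$ in $(\tilde{x},\tilde{y})$ and hence $\leq N$ in $(\tilde{x}^p,\tilde{y}^p)$), which the paper's proof leaves implicit --- a small but genuine improvement in completeness.
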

\medskip
\begin{proof}
 It suffices to show that the $p$-determinant vanishes after taking partial derivatives
\begin{equation*}\tag{3.8}
\frac{\partial}{\partial\tilde{x}}\DetB L=\frac{\partial}{\partial\tilde{y}}\DetB L=0.
\end{equation*}

The proof of that particular statement is elementary and reduces to an application of a well-known result of Jacobi, namely that
\begin{equation*}\tag{3.9}
\frac{\partial \det(A)}{\partial x}=\Tr(\text{adj}(A)\frac{\partial A}{\partial x}),
\end{equation*}
where "$\text{tr}$" \; means taking the trace of a matrix, and $\text{adj}(A)$ is the adjugate matrix of $A$. Note that the adjugate can be expressed as a finite sum of powers of $A$: if $P_A(\lambda)=\det(A-\lambda I)$ is the characteristic polynomial of $A$ and $f_A(t)=(P_A(0)-P_A(\lambda))/\lambda$, then
\begin{equation*}
\text{adj}(A)=f_A(A).
\end{equation*}
In particular, the adjugate commutes with $A$.

\medskip

Let us consider the derivative with respect to $\tilde{x}$. Just as in characteristic zero, we have
\begin{equation*}\tag{3.10}
[Y_p,X_p^k]=kX_p^{k-1},
\end{equation*}
so that taking the derivative of the matrix $A(\tilde{x},\tilde{y})$ from the $p$-determinant expression is equivalent to taking the commutator with $Y_p$:
\begin{equation*}\tag{3.11}
\frac{\partial A}{\partial \tilde{x}}=-[A,Y_p].
\end{equation*}
Plugging it in (3.9) yields
\begin{equation*}\tag{3.12}
\frac{\partial \det(A)}{\partial \tilde{x}}=-\Tr(\text{adj}(A)[A,Y_p]),
\end{equation*}
which equals zero by the cyclic property of the trace (applied after one permutes the commuting $\text{adj}(A)$ and $A$). The vanishing of the derivative with respect to $\tilde{y}$ is shown similarly.
\end{proof}

\bigskip

We are also going to need the following observation:
\medskip

\begin{lem}
 If $L\neq 0$ then $\DetB L\neq 0$.
\end{lem}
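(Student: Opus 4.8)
The plan is to read off the leading term of $\DetB L$ directly from the matrix presentation supplied by Lemma 3.1. Recall that under the isomorphism of that lemma the generators of $A_{1,\mathbf{k}}$ (after the central extension) go to $x\mapsto X_p+\tilde{x}I_p$ and $y\mapsto Y_p+\tilde{y}I_p$, so that the matrix $A(\tilde x,\tilde y)=\sum_{i+j\le N}a_{ij}(X_p+\tilde x I_p)^i(Y_p+\tilde y I_p)^j$ appearing in (3.7) is exactly the image of $L$ under an algebra isomorphism onto $\Mat(p\times p,\mathbf{k}[\tilde x,\tilde y])$. Faithful flatness of the extension already shows this image is nonzero whenever $L\neq 0$, but a nonzero matrix may well have vanishing determinant, so this observation alone does not suffice; the whole point is to control the determinant, and for that I would keep track of total degree in the pair $(\tilde x,\tilde y)$.

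First I would set $d=\max\{i+j\mid a_{ij}\neq 0\}$ and write $L_d(\tilde x,\tilde y)=\sum_{i+j=d}a_{ij}\tilde x^i\tilde y^j$ for the (nonzero) top-order symbol of $L$. Since $X_p$ and $Y_p$ carry no $\tilde x,\tilde y$, every factor of $X_p$ or $Y_p$ in the expansion of $(X_p+\tilde x I_p)^i(Y_p+\tilde y I_p)^j$ strictly lowers the $(\tilde x,\tilde y)$-degree; hence the part of $A(\tilde x,\tilde y)$ of top total degree $d$ is precisely the scalar matrix $L_d(\tilde x,\tilde y)\,I_p$, while all remaining entries have degree $\le d-1$. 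Thus $A=L_d\,I_p+B$ with $B$ entrywise of degree $\le d-1$.

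Next I would expand $\Det(A)$ over permutations. The identity permutation contributes $\prod_k(L_d+(\deg\le d-1))=L_d^{\,p}+(\deg\le pd-1)$, whereas any non-identity $\sigma$ fixes at most $p-2$ indices and therefore contributes a product of total degree at most $(p-2)d+2(d-1)=pd-2$. Consequently the unique homogeneous part of $\DetB L$ of top degree $pd$ is $L_d^{\,p}$. Because $\Char\mathbf{k}=p$, the Frobenius (``freshman's dream'') gives $L_d^{\,p}=\sum_{i+j=d}a_{ij}^{\,p}\,\tilde x^{\,ip}\tilde y^{\,jp}$, which is nonzero (some $a_{ij}\neq 0$ and $a\mapsto a^p$ is injective on a field) and, incidentally, manifestly a polynomial in $\tilde x^p,\tilde y^p$, consistent with Lemma 3.2. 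A nonzero top-degree part forces $\DetB L\neq 0$.

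The main obstacle is the middle step: ruling out cancellation in the determinant. The content is that the off-diagonal, nilpotent corrections coming from $X_p,Y_p$ can only decrease the $(\tilde x,\tilde y)$-degree, so the diagonal symbol term cannot be cancelled; the permutation count makes this precise. I would note as an alternative, more conceptual route that one may instead base change to the fraction field $\mathbf{k}(\tilde x,\tilde y)$: the central localization of $A_{1,\mathbf{k}}$ is a division algebra of degree $p$ over $\mathbf{k}(x^p,y^p)$, in which a nonzero $L$ is invertible, and adjoining the $p$-th roots $\tilde x,\tilde y$ splits it into $\Mat(p\times p,\mathbf{k}(\tilde x,\tilde y))$; the image of an invertible element is invertible, so its determinant is a nonzero element of $\mathbf{k}(\tilde x,\tilde y)$, i.e. $\DetB L\neq 0$. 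The elementary degree argument is preferable here, since it is self-contained and reuses only Lemma 3.1.
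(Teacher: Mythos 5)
Your proof is correct, and it reaches the same destination as the paper's --- the survival of a top-degree term in $\DetB L$ --- by a genuinely different decomposition. The paper writes the matrix $A(\tilde x,\tilde y)$ as a sum of monomial matrices $A^1+\cdots+A^m$ and proves a row-multilinearity sublemma (Sublemma 3.4) to expand $\Det(A)$ over all ways of choosing rows from the summands; it then singles out the \emph{lexicographically} leading monomial $a_{i_0j_0}\tilde x^{i_0}\tilde y^{j_0}I_p$ among the top-total-degree terms and checks that the only contribution to the monomial $\tilde x^{pi_0}\tilde y^{pj_0}$ is $a_{i_0j_0}^p$. You instead split $A=L_d\,I_p+B$ with $B$ entrywise of degree $\le d-1$ and run the Leibniz expansion with a degree count on permutations: only the identity permutation can reach degree $pd$, and its top part is $L_d^{\,p}$. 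Both degree counts are sound (a non-identity permutation of $\{1,\dots,p\}$ moves at least two indices, and the scalar-matrix shape of the top symbol is correct since every occurrence of $X_p$ or $Y_p$ lowers the $(\tilde x,\tilde y)$-degree). Your route buys a cleaner and slightly stronger conclusion --- the entire degree-$pd$ homogeneous part of $\DetB L$ is the Frobenius twist $\sum_{i+j=d}a_{ij}^{\,p}\tilde x^{\,ip}\tilde y^{\,jp}$ of the principal symbol, not merely one surviving monomial --- and it dispenses with both the lexicographic tie-breaking and the sublemma. What the paper's choice buys is reuse: Sublemma 3.4 and the lexicographic leading-coefficient bookkeeping are invoked again in the proof of Theorem 3.6, so the authors set up that machinery here. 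Your closing aside on the division-algebra route is also viable, but it leans on the nontrivial classical fact that the generic fiber of $A_{1,\mathbf{k}}$ over its center is a \emph{division} algebra (central simplicity alone would not exclude a vanishing reduced norm), so your preference for the self-contained degree argument is the right call.
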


\begin{proof}
 The statement is true if $L$ has degree zero.

Suppose $\Deg L =N >0$. Let $(i_0,\;j_0)$ correspond to the leading term in the $(x,\;y)$-lexicographical ordering of all the monomials in $L$ of highest total degree (that is, $i_0+j_0=N$ and all the other degree $N$ terms in $L$ have $\deg_x$ less than $i_0$). The $p$-determinant of this term is easy to evaluate - it is the product of powers of characteristic polynomials of nilpotent matrices $X_p,\;Y_p$ (over the variables $\tilde{x}$ and $\tilde{y}$, respectively), multiplied by $a_{i_0j_0}^p$, and hence is equal to $a_{i_0j_0}^p\tilde{x}^{pi_0}\tilde{y}^{pj_0}$.

\smallskip

It suffices to show that the $p$-determinant of $L$, viewed as a polynomial in $\tilde{x}$ and $\tilde{y}$, contains a term $\tilde{x}^{pi_0}\tilde{y}^{pj_0}$ with the coefficient being precisely $a_{i_0j_0}^p$. In order to proceed, we are going to need the following technical sublemma.
\smallskip

\begin{slem}
For an arbitrary collection $\lbrace A^1,\ldots,A^m\rbrace$ of $p\times p$-matrices the determinant of its sum decomposes into a sum of determinants as follows:
\begin{equation*}\tag{3.13}
\Det (A^1+\cdots+A^m) =\sum_{\sigma}\Det(A^\sigma),
\end{equation*}
where the sum extends over all maps $\sigma:\lbrace 1,\ldots,p\rbrace\rightarrow \lbrace 1,\ldots,m\rbrace$, and the entries of $A^\sigma$ are
\begin{equation*}\tag{3.14}
(A^\sigma)_{ij}=A^{\sigma(i)}_{ij}.
\end{equation*}
\end{slem}
\begin{proof}
Indeed, let $A=A^1+\cdots+A^m$. Expanding the determinant of $A$ along the first row yields
\begin{equation*}
\det A = \sum_{i=1}^{m}\det A^{(i)},
\end{equation*}
where the first row of $A^{(i)}$ is assembled out of elements of $A^i$, and the remaining rows are from $A$. Each $\det A^{(i)}$ is then expanded along the second row:
\begin{equation*}
\det A^{(i)} = \sum_{j=1}^{m}\det A^{(i,j)},
\end{equation*}
where in $A^{(i,j)}$ the first row is the first row of $A^i$, the second one is the second row of $A^j$, so that
\begin{equation*}
\det A = \sum_{(i,j)\in [m]\times [m]} \det A^{(i,j)}.
\end{equation*}
Iterating the process, we obtain
\begin{equation*}
\det A = \sum_{1\leq i_1,\ldots,i_p\leq m} \det A^{(i_1,\ldots,i_p)}.
\end{equation*}
Each term $(i_1,\ldots,i_p)\in [m]^{\times p}$ defines a map $\sigma:\lbrace 1,\ldots,p\rbrace\rightarrow \lbrace 1,\ldots,m\rbrace$, likewise every map $[p]\rightarrow [m]$ is present. The k-th row of the matrix $A^{(i_1,\ldots,i_p)}$ is the k-th row of $A^{i_k}$, which implies $A^{(i_1,\ldots,i_p)}=A^{\sigma}$. Sublemma is proved.\\
\end{proof}
\smallskip


The $p$-determinant is the determinant of the matrix
\begin{equation*}
\sum_{i+j\leq N}a_{ij}(X_p+\tilde{x}I_p)^i(Y_p+\tilde{y}I_p)^j,
\end{equation*}
which in turn is a polynomial in $\tilde{x}$ and $\tilde{y}$ with matrix coefficients. Let us rewrite this matrix as a sum $A^1+\cdots+A^m$, where all $A^k,\;k=1,\ldots,m$ are monomials in $\tilde{x}$ and $\tilde{y}$, and then apply the sublemma.

By definition of $(i_0,\;j_0)$, the term
\begin{equation*}
A^m=a_{i_0j_0}\tilde{x}^{i_0}\tilde{y}^{j_0}I_p,
\end{equation*}
which appears after one expands all of the binomials $(X_p+\tilde{x}I_p)^i$ and $(Y_p+\tilde{y}I_p)^j$, is of highest total degree $N$ and also $(\tilde{x},\tilde{y})$-lexicographically greater than all the other terms of total degree $N$. The determinant of this term is equal to $a_{i_0j_0}^p\tilde{x}^{pi_0}\tilde{y}^{pj_0}$ as noted above - we just need to prove that nothing can annihilate it in the decomposition obtained from the sublemma. Indeed, summands $A^k$ with total degree less than $N$ cannot generate a determinant term $\Det(A^{\sigma})$ of total degree $pN$, neither can their rows be mixed with the rows of degree $N$ terms to produce a $\Det(A^{\sigma})$ of degree $pN$. In fact, the only way to match the $pN$ is to combine rows taken exclusively from degree $N$ summands. All such summands are of the form $a_{ij}\tilde{x}^{i}\tilde{y}^{j}I_p$ with $i+j=N$ (diagonal matrices), but then in order to reach the highest lexicographical term one must take all $p$ rows from $a_{i_0j_0}\tilde{x}^{i_0}\tilde{y}^{j_0}I_p$, in other words, set $A^{\sigma}=A^m$. Therefore, nothing can neutralize the $a_{i_0j_0}^p\tilde{x}^{pi_0}\tilde{y}^{pj_0}$ term in the $p$-determinant of $L$, and the lemma is proved.
\end{proof}

\bigskip

Let $\mathbf{k}$ be an algebraically closed field. Denote by $\mathfrak{C}(\mathbb{A}_{\mathbf{k}}^2)$ the set of all algebraic curves in $\mathbb{A}_{\mathbf{k}}^2=\Spec \mathbf{k}[z_1,z_2]$. This set possesses a natural structure of an ind-scheme. Indeed, any curve in $\mathfrak{C}(\mathbb{A}_{\mathbf{k}}^2)$ is the zero-locus of some polynomial in two variables. The correspondence between curves and polynomials is one-to-one modulo multiplicative constant, therefore the set of curves of degree $\leq d$ is the projective space  $\mathbb{P}^{\frac{(d+1)(d+2)}{2}-1}$ over $\mathbf{k}$. Indexed by $d\in\mathbb{N}$, these spaces together with obvious embeddings form an inductive system with a direct limit
\begin{equation*}\tag{3.15}
\mathfrak{C}(\mathbb{A}_{\mathbf{k}}^2)=\varinjlim\mathbb{P}^{\frac{(d+1)(d+2)}{2}-1}.
\end{equation*}

\medskip

This expression can be viewed as the definition of the infinite-dimensional projective space.\\

\medskip

The first Weyl algebra $A_{1,\mathbf{k}}$ together with the Bernstein filtration
\begin{gather*}\tag{3.16}
\mathfrak{F}=\lbrace F_d\;|\;d\in\mathbb{Z}_+\rbrace, \\
F_d=\lbrace L\in A_{1,\mathbf{k}}\;|\;\Deg_x L+\Deg_y L\leq d\rbrace,\\
\Deg_x(x)=\Deg_y(y)=1;
\end{gather*}
can be turned into an infinite-dimensional projective space by the equivalence relation $\mathfrak{F}/\mathbf{k}^\times$, which glues together elements differing by a non-zero factor. Note that its dimensional structure as a direct limit is the same as that of the ind-scheme $\mathfrak{C}(\mathbb{A}_{\mathbf{k}}^2)$.\\

\medskip

Let $\mathcal{F}_d/\mathbf{k}^\times$ be the projective space of classes of differential operators of degree at most $d$, and let $\mathfrak{C}_d$ be the space of algebraic curves in $\mathbb{A}^2_{\mathbf{k}}$ of degree at most $d$.

\medskip

By Lemma 3.2, for every $d\in\mathbb{N}$ the $p$-determinant map induces a morphism
\begin{equation*}\tag{3.17}
\Theta_d:\mathcal{F}_d/\mathbf{k}^\times\rightarrow\mathfrak{C}_d ,\;\;[L]\mapsto\mathit{C}_L\subset \mathbb{A}_{\mathbf{k}}^2,
\end{equation*}
such that $C_L$ is the zero locus of the polynomial $\DetB(L)$ (up to a multiplicative constant) in variables $z_1=\tilde{x}^p$ and $z_2=\tilde{y}^p$.
\medskip

For any pair $(d,\;d')$ such that $d\leq d'$ denote by
\begin{equation*}
\mu_{dd'}:\mathcal{F}_d/\mathbf{k}^\times\rightarrow\mathcal{F}_{d'}/\mathbf{k}^\times
\end{equation*}
and
\begin{equation*}
\nu_{dd'}:\mathfrak{C}_d\rightarrow\mathfrak{C}_{d'}
\end{equation*}
the obvious embeddings that together with the sets $\mathcal{F}_d/\mathbf{k}^\times$ and $\mathfrak{C}_d$ respectively form the inductive systems described above. Clearly these embeddings are closed in the Zariski topology.

\medskip

The following proposition is elementary:

\begin{prop}
For all $d,d'\in\mathbb{N}$ the diagram
\begin{equation*}\tag{3.18}
\begin{tikzcd}
 \mathcal{F}_d/\mathbf{k}^\times \arrow{r}{\Theta_d} \arrow{d}{\mu_{dd'}} & \mathfrak{C}_d\arrow{d}{\nu_{dd'}}\\
\mathcal{F}_{d'}/\mathbf{k}^\times\arrow{r}{\Theta_{d'}} & \mathfrak{C}_{d'}
\end{tikzcd}
\end{equation*}
is commutative.
\end{prop}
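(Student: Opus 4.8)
The plan is to show that the two composite maps $\nu_{dd'}\circ\Theta_d$ and $\Theta_{d'}\circ\mu_{dd'}$ agree by tracing the image of an arbitrary class $[L]\in\mathcal{F}_d/\mathbf{k}^\times$, the key observation being that the only substantive ingredient is the degree estimate of Lemma 3.2. The heart of the matter is that the $p$-determinant (3.7) is defined by a formula that makes no reference to the filtration level $d$: it depends solely on the operator $L$, through its coefficients $a_{ij}$, and on the fixed nilpotent matrices $X_p,\,Y_p$. Consequently, for a representative $L\in F_d\subseteq F_{d'}$, the polynomial $\DetB L$ produced when computing $\Theta_d([L])$ and the one produced when computing $\Theta_{d'}([L])$ are literally the same polynomial in $z_1=\tilde{x}^p,\,z_2=\tilde{y}^p$.

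First I would unwind the right-then-down composite. Applying $\Theta_d$ sends $[L]$ to the curve $C_L\subset\mathbb{A}^2_{\mathbf{k}}$ cut out by $\DetB L$; by Lemma 3.2 this polynomial has degree $\leq d$ in $z_1,z_2$, so $C_L$ is a legitimate point of $\mathfrak{C}_d$. The embedding $\nu_{dd'}$, being induced by the inclusion of degree-$\leq d$ polynomials into degree-$\leq d'$ polynomials, leaves the underlying zero locus unchanged and simply regards $C_L$ as a point of $\mathfrak{C}_{d'}$. Next I would unwind the down-then-right composite: $\mu_{dd'}$ is induced by the inclusion $F_d\hookrightarrow F_{d'}$ and sends $[L]$ to the same class viewed in $\mathcal{F}_{d'}/\mathbf{k}^\times$, whereupon $\Theta_{d'}$ returns the zero locus of $\DetB L$, again regarded in $\mathfrak{C}_{d'}$. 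Since both routes terminate at the zero locus of one and the same polynomial $\DetB L$, they coincide, establishing
\begin{equation*}
\nu_{dd'}\bigl(\Theta_d([L])\bigr)=\Theta_{d'}\bigl(\mu_{dd'}([L])\bigr).
\end{equation*}

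The only place where anything genuinely must be checked is the compatibility of the two inclusions with the bound $\Deg\DetB L\leq d$; this is exactly what Lemma 3.2 supplies, and it is what guarantees that $\nu_{dd'}$ can receive the image of $\Theta_d$ without alteration. I do not expect a real obstacle here — the proposition is elementary precisely because the defining formula (3.7) is intrinsic to $L$ — but for completeness I would also note that the argument is not merely set-theoretic: both $\Theta_d$ and $\Theta_{d'}$ are given by the same homogeneous degree-$p$ forms in the coefficients $a_{ij}$ (scaling $L$ by $\lambda$ scales $\DetB L$ by $\lambda^p$, which is what lets the maps descend to the projective quotients), so the identity of the defining polynomials upgrades the pointwise equality to commutativity of the square as a diagram of morphisms of ind-schemes.
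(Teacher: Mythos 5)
Your proof is correct, and it supplies exactly the argument the paper omits: the authors simply declare the proposition ``elementary'' and give no proof at all. Your two observations --- that $\DetB L$ is defined intrinsically by $L$ independently of the filtration level, and that the homogeneity $\DetB(\lambda L)=\lambda^p\DetB L$ together with the degree bound of Lemma 3.2 makes both composites land on the same point of $\mathfrak{C}_{d'}$ --- are precisely what is needed.
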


\medskip

As a corollary,  there is a well-defined morphism of ind-schemes
\begin{equation*}\tag{3.19}
\Theta:A_{1,\mathbf{k}}/\mathbf{k}^\times\rightarrow \mathfrak{C}(\mathbb{A}_{\mathbf{k}}^2),
\end{equation*}
that maps projective classes of differential operators to planar algebraic curves.

\bigskip

Our main result reduces to the following statement:

\medskip

\begin{thm}
 $\Theta$ is surjective.
\end{thm}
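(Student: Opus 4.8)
The plan is to deduce surjectivity of $\Theta$ from surjectivity of each finite-level map $\Theta_d$ and then pass to the direct limit. Fix $d$ and set $M:=\frac{(d+1)(d+2)}{2}-1$. By the discussion preceding the theorem both the source $\mathcal{F}_d/\mathbf{k}^\times$ and the target $\mathfrak{C}_d$ are copies of $\mathbb{P}^M$ over $\mathbf{k}$: the former parametrizes coefficient vectors $(a_{ij})_{i+j\leq d}$ up to scaling, the latter the coefficient vectors of degree-$\leq d$ polynomials in $z_1,z_2$ up to scaling, and Lemma 3.2 guarantees that $\DetB L$ indeed lands among the latter. In these coordinates $\Theta_d$ is given by the coefficients of $\DetB L$, viewed as functions of the $a_{ij}$. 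Since each entry of the matrix $\sum a_{ij}(X_p+\tilde x I_p)^i(Y_p+\tilde y I_p)^j$ is $\mathbf{k}$-linear in the $a_{ij}$, every coefficient of the $p\times p$ determinant $\DetB L$ is a homogeneous polynomial of degree $p$ in the $a_{ij}$; in particular $\DetB(\lambda L)=\lambda^p\DetB L$, so $\Theta_d$ is given by a system of $M+1$ degree-$p$ forms.

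The key point is that this system has no common zero. A common zero would be a nonzero vector $(a_{ij})$ for which all coefficients of $\DetB L$ vanish, i.e.\ an operator $L\neq 0$ with $\DetB L=0$, which is exactly ruled out by Lemma 3.3. Hence $\Theta_d$ is a genuine (everywhere-defined, base-point-free) morphism $\mathbb{P}^M\to\mathbb{P}^M$, and $\Theta_d^{*}\mathcal{O}(1)=\mathcal{O}(p)$ is ample on $\mathbb{P}^M$. A proper morphism pulling an ample line bundle back to an ample one can contract no curve: a contracted complete curve $C$ would satisfy $\Theta_d^{*}\mathcal{O}(1)\cdot C=0$ yet $\mathcal{O}(p)\cdot C=p\deg C>0$, a contradiction. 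Thus $\Theta_d$ has finite fibres and is a finite morphism, so its image is closed, irreducible, and of dimension $\dim\mathbb{P}^M=M$; being an $M$-dimensional closed irreducible subset of the irreducible $M$-dimensional variety $\mathbb{P}^M$, it must be all of $\mathbb{P}^M$. Therefore each $\Theta_d$ is surjective.

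Surjectivity of $\Theta$ then follows formally: any curve $C\in\mathfrak{C}(\mathbb{A}^2_{\mathbf{k}})$ lies in some $\mathfrak{C}_d$, and by the above there is $[L]\in\mathcal{F}_d/\mathbf{k}^\times$ with $\Theta_d([L])=C$; the commutativity of Proposition 3.4 identifies this with $\Theta([L])=C$ in the limit $\Theta=\varinjlim\Theta_d$. I expect the genuine content to sit entirely in Lemmas 3.2 and 3.3, which we may invoke freely: Lemma 3.2 forces $\Theta_d$ to land in the projective space $\mathfrak{C}_d=\mathbb{P}^M$ of the \emph{same} dimension as the source, and Lemma 3.3 supplies the base-point-freeness. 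The only subtlety to watch is precisely this equality of dimensions $\frac{(d+1)(d+2)}{2}-1$, which is what lets a finite self-map of $\mathbb{P}^M$ be onto; were the target even one dimension larger, the argument would fail, so the main obstacle is really guaranteeing — via the two lemmas — that the $p$-determinant neither drops the operator to zero nor exceeds the expected degree.
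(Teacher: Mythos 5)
Your argument is correct, and it takes a genuinely different route from the paper's for the crucial step. The paper also reduces the theorem to showing that each $\Theta_d$ is a dominant morphism of projective varieties with finite fibres, and it likewise extracts dominance from Lemma 3.3; but to exclude positive-dimensional fibres it runs a much longer argument: assuming a curve $\mathfrak{L}$ sits inside a fibre, it expands the coefficient functions $a_{ij}$ in Laurent series at a pole $L_0$ of $\mathfrak{L}$, uses a power-series extension of Sublemma 3.4 to show that $\DetB(L(\tau,L_0))$ has a genuine pole of order $pk$ with leading term $\DetB(A_{-k})\neq 0$ (Lemma 3.3 again), and then normalizes by the top lexicographic coefficient to check that the resulting family of polynomials is not projectively constant. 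You replace all of this with the single structural observation that the coefficients of $\DetB L$ are homogeneous forms of degree $p$ in the $a_{ij}$, so that Lemma 3.3 says precisely that these $M+1$ forms have no common zero on $\mathbb{P}^M$; a base-point-free self-map of $\mathbb{P}^M$ given by forms of positive degree pulls $\mathcal{O}(1)$ back to the ample $\mathcal{O}(p)$, hence contracts no complete curve, hence is finite and therefore surjective onto the equidimensional target. This is shorter, makes the use of Lemma 3.3 more transparent (base-point-freeness rather than the paper's somewhat loosely stated ``injectivity of the induced homomorphism of coordinate rings''), and entirely avoids the delicate ``not projectively equivalent'' step around (3.26); the price is invoking a small amount of intersection theory, or equivalently the classical fact that a base-point-free linear-algebraic system of forms of equal positive degree defines a finite morphism of projective space. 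Both proofs, as you note, concentrate the real content in Lemmas 3.2 and 3.3 and the equality of dimensions of source and target.
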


\begin{proof}
 It is a well-known fact (see, for instance, \cite{12}) that any dominant morphism of finite-dimensional projective varieties with discrete fibers is surjective. By definition, $\Theta$ is in effect a system $\lbrace \Theta_d\rbrace_{d\in\mathbb{N}}$ of morphisms of finite-dimensional projective varieties. In order to prove the statement of our theorem it is therefore sufficient to show that all $\Theta_d$ are dominant morphisms with discrete fibers.

\medskip

For any $d\in\mathbb{N}$ the morphism $\Theta_d$ is formed by compactifying a morphism
\begin{equation*}\tag{3.20}
\Theta_d^{\text{aff}}:\mathbb{A}^{\frac{(d+1)(d+2)}{2}}_\mathbf{k}\rightarrow\mathbb{A}^{\frac{(d+1)(d+2)}{2}}_\mathbf{k},
\end{equation*}
which is essentially the same $p$-determinant. The induced homomorphism of coordinate rings is injective by Lemma 3.3, which is equivalent to the image of $\Theta_d^{\text{aff}}$ being dense in the Zariski topology.

\medskip

It now remains to show that a generic fiber $\Theta_d^{-1}(\mathit{C})$ of a planar curve $\mathit{C}$ under the map $\Theta_d$ is discrete. That will be the case if it is zero-dimensional.

\medskip

We proceed as follows. Assuming the contrary, we choose an arbitrary projective curve inside the fiber. In order to arrive at a contradiction, we need to prove that $\Theta_d$ maps this curve to a set (of planar algebraic curves) that consists of more than one point. We first demonstrate that the $p$-determinant maps the curve to a proper parametric family of polynomials in $\tilde{x}^p$ and $\tilde{y}^p$. It then suffices to show that this family has elements that are not projectively equivalent. Both steps rely heavily on Lemma 3.3 and Sublemma 3.4.

\medskip

Let $\dim\Theta_d^{-1}(\mathit{C})>0$. Then there exists an algebraic subvariety $\mathfrak{L}\subseteq\Theta_d^{-1}(\mathit{C})$ of dimension one. For any class of differential operator $L=\sum_{i+j\leq d}a_{ij}x^iy^j$ the coefficients $a_{ij}$ assume the role of homogeneous coordinates in $\mathcal{F}_d/\mathbf{k}^\times$. The curve $\mathfrak{L}$ can then be represented as a one-parametric family of differential operators given by
\begin{equation*}\tag{3.21}
L(t)=\sum_{i+j\leq d}a_{ij}(t)x^iy^j,
\end{equation*}
with all $a_{ij}(t)$ being meromorphic functions. Here $t$ is an element transcendental over $\mathbf{k}$, and $\mathbf{k}(t)\supset\mathbf{k}$ is a transcendental extension of degree one. The coordinate functions $a_{ij}$ are elements of a finite extension $K$ of $\mathbf{k}(t)$, which is the function field of the curve $\mathfrak{L}$.

\medskip

Any meromorphic function $a\in K$ that is not constant (i.e. $a\notin\mathbf{k}$) has a pole corresponding to some point $L_0\in\mathfrak{L}$. By our assumption, $\mathfrak{L}$ has dimension one, therefore not all $a_{ij}(t)$ are constant functions. Therefore, there is a point $L_0\in\mathfrak{L}$ such that some of the functions $a_{ij}$ have a pole at that point. Let $\tau$ be the local parameter at $L_0$ - that is, $\tau$ is the generator of the (unique) maximal ideal $\mathfrak{m}_{L_0}$ of the discrete valuation ring $\mathcal{O}_{\mathfrak{L},L_0}\subset K$ corresponding to the point $L_0$ (compare this situation with the purely algebraic setting as described, for example, in \cite{20}). Functions $a_{ij}$ admit the Laurent decomposition
\begin{equation*}\tag{3.22}
a_{ij}=\frac{a_{-k(ij)}(ij)}{\tau^{k(ij)}}+\cdots+\frac{a_{-1}(ij)}{\tau}+a_0(ij)+a_1(ij)\tau+\cdots.
\end{equation*}

\medskip

Let $k$ be the largest degree of the pole at $L_0$ among all $a_{ij}$, that is
\begin{equation*}\tag{3.23}
k=\max\lbrace k(ij)\;|\;k(ij)=\ord_{L_0}a_{ij}\rbrace,
\end{equation*}
where $k(ij)$ are as in (3.22). By assumption $k\geq 1$. We may plug in the Laurent decompositions of $a_{ij}$ into (3.21) and group the terms according to the powers of $\tau$ to obtain the decomposition of $L(t)$ near $L_0$:
\begin{equation*}\tag{3.24}
L(\tau,L_0)=\frac{A_{-k}}{\tau^k}+\cdots+\frac{A_{-1}}{\tau}+A_0+A_1\tau+\cdots.
\end{equation*}
By our construction all $A_l$, $l\geq -k$ belong to the term $\mathcal{F}_d/\mathbf{k}^\times$ of the inductive system, and $A_{-k}\neq 0$.

\medskip

By the natural power-series extension of the Sublemma 3.4 from the previous section, the $p$-determinant map sends $L(\tau,L_0)$ to a family of polynomials parametrized by $\tau$:
\begin{equation*}\tag{3.25}
\DetB(L(\tau,L_0))=\frac{\DetB(A_{-k})}{\tau^{pk}}+\cdots,
\end{equation*}
where dots denote the terms with $\tau^l$, $l>-k$, and $\DetB(A_{-k})\neq 0$ by Lemma 3.3. This shows that the image of $L(t)$ under the $p$-determinant is a one-parametric family of polynomials consisting of more than one point.

\medskip

Furthermore, this family never contracts to a single projective class. To see this, consider the coefficient $a_{i_0j_0}(t)$ of the highest term $x^{i_0}y^{j_0}$ in $L(t)$ with respect to the lexicographical order and divide everything by it. Clearly, this operation does not change the curve $\mathfrak{L}$. Now, consider
\begin{equation*}\tag{3.26}
L_1(t)=x^{i_0}y^{j_0}+\sum_{\text{lower-order terms}}b_{ij}(t)x^iy^j=x^{i_0}y^{j_0}+L'(t),
\end{equation*}
where $b_{ij}(t)=\frac{a_{ij}(t)}{a_{i_0j_0}(t)}$. Applying the Sublemma 3.4 to the last expression, we see that the resulting polynomial consists of the highest-order term (with respect to the lexicographical ordering) $x^{pi_0}y^{pj_0}$ with coefficient equal to one, lower-order terms generally dependent on $t$, and the $p$-determinant of $L'(t)$. The latter is processed as before, after which we obtain a one-parametric family of polynomials which are not projectively equivalent.
\medskip

Therefore, $\Theta_d$ maps the curve $\mathfrak{L}=L(t)$ to a one-parametric family of planar algebraic curves $\mathcal{C}(t)$, and we have arrived at a contradiction.

\end{proof}

\section{Conclusion}
By establishing the fact that the map (3.17) is surjective, we are looking at a way of associating to every polynomial symplectomorphism $\psi\in\Aut(P_{1,\mathbf{k}})$ an algebra automorphism of $A_{1,\mathbf{k}}$ in positive characteristic. After that we are expecting to find a lift to characteristic zero. Combined with the results in the opposite direction which are described in \cite{10}, this could lead to a proof of the $\text{B-KKC}_1$ suitable for a generalization to a higher-dimensional case. However, issues beyond technical remain (see end of Section 2), which necessitates further investigation.

\section*{Acknowledgements}
The authors would like to thank the referee for careful reading of the paper as well as for helpful comments and remarks.

This work is supported by the RFBR grant 14-01-00548.

\bigskip

\textbf{Addresses:}

A.B.-K.: Mathematics Department, Bar-Ilan University, Ramat-Gan, 52900, Israel

kanel@mccme.ru\\

A.E.: Department of Discrete Mathematics, Moscow Institute of Physics and Technology, 9 Institutskiy per., Dolgoprudny, Moscow Region, 141700, Russian Federation

elishev@phystech.edu

\end{document}